\newtheorem{theorem}{Theorem}[section]
\newtheorem*{theorem*}{Theorem}
\newtheorem{corollary}[theorem]{Corollary}
\newtheorem{lemma}[theorem]{Lemma}
\newtheorem{proposition}[theorem]{Proposition}
\theoremstyle{remark}
\newtheorem{remark}[theorem]{Remark}
\theoremstyle{definition}
\newtheorem*{definition*}{Definition}
\newtheorem*{ack}{Acknowledgments}
\newcommand{\R}{\mathbb{R}}
\newcommand{\Z}{\mathbb{Z}}
\newcommand{\N}{\mathbb{N}}
\newcommand{\C}{\mathbb{C}}
\newcommand{\K}{\mathbb{K}}
\newcommand{\bo}{\partial}
\numberwithin{equation}{section}
\renewcommand{\epsilon}{\varepsilon}
\newcommand{\action}{\curvearrowright}
\begin{document}

\title[Superrigidity in infinite dimension via harmonic maps]{Superrigidity in infinite dimension and finite rank\\
via harmonic maps}
\author{Bruno Duchesne}
\thanks{The author is supported by a postdoctoral fellowship of the Swiss National Science Foundation.}
\address{Einstein Institute of Mathematics\\
Edmond J. Safra Campus, Givat Ram\\
The Hebrew University of Jerusalem\\
Jerusalem, 91904, Israel}
\date{\today}
\begin{abstract}We prove geometric superrigidity for actions of cocompact lattices in semisimple Lie groups of higher rank on infinite dimensional Riemannian manifolds of nonpositive curvature and finite telescopic dimension. 
\end{abstract}
\maketitle

\section{Introduction}
\subsection{Geometric Superrigity}In the nineteen seventies, Margulis proved his famous \emph{superrigidity theorem} to show that irreducible lattices in higher rank semisimple Lie groups and algebraic groups are arithmetic.
\begin{theorem}[\cite{MR1090825}]\label{Margulis}Let $G,H$ be semisimple algebraic groups over local fields without compact factors. Assume that the real rank of $G$ is  at least 2 and let $\Gamma$ be an irreducible lattice of $G$.

Any homomorphism $\Gamma\to H$ with unbounded and Zariski dense image extends to a homomorphism $G\to H$.
\end{theorem}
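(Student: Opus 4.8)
I would prove this by the harmonic-map method, which is precisely the circle of ideas the present paper is built to extend to infinite dimensions; I describe the archimedean, cocompact case, which carries all of the geometric difficulty, and indicate the modifications for the full statement at the end. Assume first that $G$ is a simple real Lie group of real rank at least $2$, that $\Gamma<G$ is a cocompact lattice, and that $H$ is a real semisimple Lie group; let $X=H/K_H$ be its symmetric space, of nonpositive curvature, and $M=\Gamma\backslash G/K$ the compact locally symmetric quotient. The homomorphism $\rho$ determines a flat $X$-bundle over $M$, and since $X$ is a complete simply connected $\CAT(0)$ manifold and $M$ is compact, a section of least energy exists as soon as there is no escape to infinity. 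This is exactly where unboundedness and Zariski density of $\rho(\Gamma)$ enter: they give a reductivity condition in the sense of Corlette--Labourie ruling out a $\rho(\Gamma)$-fixed point in $\partial X$, so the Eells--Sampson heat flow converges and yields a $\rho$-equivariant harmonic map $f\colon G/K\to X$.

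The heart of the proof is the Bochner step. Apply the Bochner--Eells--Sampson--Matsushima identity to $f$ and integrate over the compact manifold $M$: the left-hand side vanishes, leaving a sum of a Ricci term, the squared norm of the second fundamental form $\nabla df$, and a term pairing $df$ with the nonpositive curvature tensor of $X$. The decisive input is the algebraic structure of the curvature operator of the \emph{higher-rank} symmetric space $G/K$: because $\mathrm{rank}\,G\geq 2$, Matsushima-type representation-theoretic estimates, in the style of Mok--Siu--Yeung and Jost--Yau, force each term of the integrand to vanish separately. Hence $\nabla df=0$ and $f$ is totally geodesic.

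A totally geodesic $\rho$-equivariant map $f$ has image a totally geodesic submanifold $Y\subseteq X$ on which $G$ acts through a Lie group homomorphism $\overline\rho$ restricting to $\rho$ on $\Gamma$; equivariance propagates from $\Gamma$ to $G$ because $f$ is real-analytic and $\Gamma$ is Zariski dense in $G$. Then Zariski density of $\rho(\Gamma)$ makes $\overline\rho(G)$ Zariski dense in $H$, and unboundedness makes $\overline\rho$ nonconstant, so $\overline\rho\colon G\to H$ is the desired extension.

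The main obstacle is the Bochner step: passing from \emph{harmonic} to \emph{totally geodesic} is where the rank $\geq 2$ hypothesis does all of its work, and it is intrinsically representation-theoretic --- one must show that the higher-rank curvature term cannot be cancelled by anything else in the Bochner integrand, a cancellation that does occur in rank one and accounts for the exceptional behaviour of $\mathrm{SU}(n,1)$ and $\mathrm{Sp}(n,1)$. The full statement requires three further ingredients: the non-archimedean target, where $X$ is replaced by the Bruhat--Tits building of $H$ and the argument rests on Gromov--Schoen's theory of harmonic maps into Euclidean buildings together with their singular analogue of the Bochner estimate; the non-cocompact case, handled by Margulis's original ergodic boundary-map argument or by controlling the energy of $f$ near the cusps; and the reduction to the case of $G$ simple, which uses the irreducibility of $\Gamma$. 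It is this entire scheme, run with $X$ an infinite-dimensional nonpositively curved manifold of finite telescopic dimension, that the rest of the paper carries out.
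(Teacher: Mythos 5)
The paper does not prove this statement at all: it is quoted as Margulis's theorem from \cite{MR1090825}, whose proof is measure-theoretic (boundary maps, ergodicity arguments on the Furstenberg boundary), and the harmonic-map machinery developed in the paper is aimed at a different statement (Theorem \ref{theorem}), not at reproving Margulis. So your route is necessarily different from the paper's, and within its announced scope --- $G$ a \emph{simple real} Lie group of rank $\geq 2$, $\Gamma$ \emph{cocompact}, $H$ real --- your sketch is essentially the Mok--Siu--Yeung geometric superrigidity argument \cite{MR1223224} and is sound modulo citing their vanishing theorem and the existence theory for equivariant harmonic maps (Zariski density does rule out a fixed point in $\partial X$, since such a point would put $\rho(\Gamma)$ inside a proper parabolic, which is Zariski closed).

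However, as a proof of the theorem as stated there are genuine gaps. First, the statement allows $G$ to be an algebraic group over a \emph{non-Archimedean} local field; then $\Gamma\backslash G/K$ is not a compact Riemannian manifold but (a quotient of) a Euclidean building, and the Eells--Sampson flow, the Bochner identity and the Matsushima-type curvature estimates on the \emph{domain} side are simply unavailable --- your remarks only treat non-Archimedean \emph{targets} via Gromov--Schoen \cite{MR1215595}. Second, ``real rank at least $2$'' for a semisimple $G$ includes products of rank-one factors (e.g.\ two copies of $\mathrm{SL}_2(\R)$) with $\Gamma$ irreducible; there your central step fails, because no simple factor carries the higher-rank curvature structure, and the needed argument is not a ``reduction to the simple case'' but the separate product argument (a Bochner formula showing the restrictions of $f$ to the fibers are harmonic, then irreducibility of $\Gamma$ forcing $\nabla df=0$), which is exactly the part of \cite{MR1223224} the paper invokes in its own proof of Theorem \ref{theorem}; in full generality for products this is the later work on superrigidity for irreducible lattices in products rather than a corollary of your rank-$\geq 2$ Bochner step. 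Third, the theorem covers non-uniform lattices, and there the existence of a finite-energy equivariant harmonic map near the cusps is a serious difficulty that your sketch does not resolve; falling back on ``Margulis's original ergodic boundary-map argument'' for that case is circular, since that argument is the proof of the very theorem you are trying to establish.
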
 

Using the dictionary between semisimple algebraic groups over local fields and symmetric spaces of noncompact type (in the Archimedean case) and Euclidean buildings (in the non-Archimedean case), Theorem \ref{Margulis} can be interpreted in a geometric way. This is the subject of the so-called \emph{geometric superrigidity}, see \cite{MR2655318} for a survey in french or the older \cite{MR1168043}. Using this geometric interpretation, Corlette \cite{MR1147961} (in the Archimedean case) and later Gromov and Schoen \cite{MR1215595} extended Margulis superridigity theorem in the case where $G$ is a simple Lie group of rank 1 that is the isometry group of a quaternionic hyperbolic space or the isometry group of the Cayley hyperbolic plane. The main tool in these two former results are harmonic maps. Some time later, Mok, Siu and Yeung gave a very general statement \cite{MR1223224} of geometric superrigidity in the Archimedean case.\\

The framework of geometric superrigidity was extended \cite{MR2219304,MR2377496} to nonpositively curved metric spaces, which  may be not locally compact, in the particular  higher rank case where $\Gamma$ is a lattice in a product.\\

In \cite[6.A]{MR1253544}, Gromov invited geometers to study some ``cute and sexy" infinite dimensional symmetric spaces of nonpositive curvature and finite rank. The geometry of these spaces $X_p(\R)=$O$(p,\infty)/($O$(p)\times$O$(\infty))$ and their analogs, $X_p(\K)$ over the field $\K$ of complex or quaternionic numbers, were studied in \cite{Duchesne:2011fk}. Gromov also conjectured that actions of lattices in semisimple Lie groups on some $X_p(\R)$ should be subject to geometric superrigidity.\\

In this article, a Riemannian manifold will be a connected smooth manifold modeled on a separable Hilbert space and endowed with a smooth Riemannian metric. In particular, such a manifold \emph{may have infinite dimension}. See \cite{MR1330918} or \cite{MR1666820} for an accurate definition.\\

The main result of this paper is the following theorem.
\begin{theorem}\label{theorem}Let $\Gamma$ be an irreducible torsion free uniform lattice in a connected higher rank semisimple Lie group with finite center and no compact factor $G$. Let $N$ be a simply connected complete Riemannian manifold of nonpositive sectional curvature and finite telescopic dimension.

If $\Gamma$ acts by isometries on $N$ without fixed point in $N\cup\bo N$ then there exists  a $\Gamma$-equivariant isometric totally geodesic embedding of a product of irreducible factors of the symmetric space of $G$ in $N$.
\end{theorem}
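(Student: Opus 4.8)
The plan is to run the harmonic-map approach to geometric superrigidity --- in the spirit of Corlette, Mok--Siu--Yeung and Jost--Yau --- and to verify that each of its three stages survives the passage to an infinite-dimensional target, with finite telescopic dimension playing the role that local compactness plays classically. Write $X=G/K$ for the symmetric space of $G$; since $G$ is semisimple with finite center and no compact factor, $X=X_1\times\cdots\times X_n$ is a product of irreducible symmetric spaces of noncompact type (no Euclidean factor), and $M=\Gamma\backslash X$ is a compact Riemannian manifold because $\Gamma$ is torsion free and uniform. A $\Gamma$-equivariant map $X\to N$ is the same thing as a section of the flat bundle $X\times_\Gamma N\to M$, and by Cartan--Hadamard $N$ is a complete $\CAT(0)$ space, so the finite telescopic dimension hypothesis is available in the sense of Caprace--Lytchak.

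\emph{Existence of an equivariant harmonic map.} First I would minimise energy over $\Gamma$-equivariant finite-energy maps $X\to N$ --- which exist since $M$ is compact --- following Korevaar--Schoen. The only obstruction to the existence of a minimiser is that a minimising sequence escapes to infinity, and in that event the standard argument manufactures a $\Gamma$-fixed point in $\bo N$, contrary to hypothesis. I expect this to be the main obstacle: Korevaar--Schoen's compactness, on which both their existence theory and their regularity theory rest, as well as this dichotomy, must be re-established from finite telescopic dimension --- existence of circumcenters of bounded sets, adequate behaviour of weak limits, absence of fixed points of bounded subgroups --- rather than from properness of closed balls. A minimiser $f\colon X\to N$ is locally Lipschitz by Korevaar--Schoen regularity; since $N$ is a smooth Riemannian manifold, elliptic bootstrapping on the harmonic-map equation $\tau(f)=\trace\nabla df=0$ then upgrades $f$ to a smooth map.

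\emph{The harmonic map is totally geodesic.} Here I would invoke the higher-rank Bochner formula of Mok--Siu--Yeung and Jost--Yau: because $X$ has higher rank and $\Gamma$ is irreducible, there is a parallel tensor on $X$ whose pairing with the refined Bochner identity for $f$, integrated over the closed manifold $M$, produces an identity $0=\int_M(\text{a nonnegative quadratic in }\nabla df)+\int_M Q^N(df)$ in which $Q^N(df)\le 0$ pointwise for an \emph{arbitrary} nonpositively curved target --- this sign is the algebraic feature of higher rank, and it is a pointwise identity on the finite-rank subbundle $\im(df)\subset f^*TN$, hence untouched by $\dim N=\infty$. It follows that $\nabla df=0$, i.e.\ $f$ is totally geodesic, and in particular $df$ has locally constant rank.

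\emph{From totally geodesic to the embedding.} Lift $f$ to a $\Gamma$-equivariant totally geodesic map $\tilde f\colon X\to N$ with $\nabla d\tilde f=0$. Its kernel is a parallel distribution on $X$; by the de Rham decomposition $TX=\bigoplus_i TX_i$ and the irreducibility of each $X_i$ it equals $\bigoplus_{i\notin S}TX_i$ for some $S\subseteq\{1,\dots,n\}$, so $\tilde f$ factors through the projection $X\to X_S:=\prod_{i\in S}X_i$ and induces a totally geodesic immersion $X_S\to N$ whose differential is injective. If $S=\emptyset$ then $\tilde f$ is constant with a $\Gamma$-fixed value in $N$, which is excluded; hence $S\ne\emptyset$. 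Applying Schur's lemma factorwise shows $d\tilde f$ is a homothety on each $X_i$ ($i\in S$), so after the usual normalisation of the metrics $X_S\to N$ is a totally geodesic isometric immersion of a symmetric space of noncompact type; its image is a complete totally geodesic submanifold of the Hadamard manifold $N$, hence convex and simply connected, so the immersion is injective. This is the asserted $\Gamma$-equivariant isometric totally geodesic embedding of a product of irreducible factors of $X$ into $N$.
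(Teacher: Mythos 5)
Your proposal is correct and follows essentially the same route as the paper: Korevaar--Schoen energy minimisation (with finite telescopic dimension and the absence of a fixed point in $\bo N$ supplying existence of the equivariant harmonic map), elliptic bootstrapping in the global Cartan--Hadamard chart for smoothness, the Mok--Siu--Yeung integrated Bochner identity to force $\nabla df=0$, and then factorisation through a nonempty product of irreducible factors on which $f$ is an isometric totally geodesic embedding. The one step you flag as the ``main obstacle'' is handled in the paper exactly along the lines you anticipate, by showing the energy functional $I$ on the convex set of Lipschitz equivariant maps has $\Gamma$-invariant closed convex sublevel sets whose common intersection is nonempty unless the center of directions yields a $\Gamma$-fixed point at infinity.
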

In the unpublished paper  \cite{ks99}, Korevaar and Schoen introduced the notion of \textit{FR-spaces} (Finite Rank spaces). Later Caprace and Lytchak introduced the notion of spaces of \emph{finite telescopic dimension} in  \cite{MR2558883}, without knowing \cite{ks99}. The two notions are the same for complete CAT(0) spaces and can be defined by an inequality at large scale inspired by Jung Inequality. For any bounded subset $Y\subset\R^n$, Jung proved that 
\[\mathrm{rad}(Y)\leq\sqrt{\frac{n}{2(n+1)}}\mathrm{diam}(Y),\]
see \cite{MR1456512} and references therein. A complete CAT(0) space $X$ has \emph{telescopic dimension less than} $n$ if for any $\delta>0$ there exists $D>0$ such that for any bounded subset $Y\subseteq X$ of diameter larger than $D$, we have \[\mathrm{rad}(Y)\leq\left(\delta+\sqrt{\frac{n}{2(n+1)}}\right)\mathrm{diam}(Y).\]\\

Theorem \ref{theorem} applies to the particular case where $N$ is a symmetric space of noncompact type and finite rank (see \cite{sym} for the meaning of noncompact type in infinite dimension). The fact that symmetric spaces of noncompact type and finite rank have finite telescopic dimension was expected in \cite{ks99} and proved in Corollary 1.8 of \cite{sym}. Actually, it is proved that a symmetric space of noncompact type is a finite product of irreducible symmetric spaces of noncompact type and that irreducible factors of infinite dimension are some $X_p(\mathbb{K})$.\\ 

This theorem implies that there is no \textit{geometrically Zariski-dense} (see \cite[5.B]{Monod:2012fk}) action of a uniform lattice as in Theorem \ref{theorem},  on a symmetric space of noncompact type, infinite dimension and finite rank. In rank 1, it was shown that the isometry group of the real hyperbolic space $\mathbf{H}^n$ has geometrically Zariski-dense actions on the infinite dimensional hyperbolic $\mathbf{H}^\infty$, see \cite{Monod:2012fk}.\\

The strategy  to prove Theorem \ref{theorem} goes as follows. We first use a result of existence of harmonic maps due to Korevaar and Schoen  (see Theorem \ref{existence}) for CAT(0) spaces targets. When the  target is moreover a Riemannian manifold, the unique harmonic map is smooth, as it was proved for $\mathbf{H}^\infty$ in  Proposition 7 of \cite{Delzant:2010fk}. We conclude  that the harmonic map is totally geodesic, thanks to an argument from \cite{MR1223224}.

\begin{remark}\label{complex}Theorem \ref{theorem} extends to the case where $G$ is the connected component of the isometry group of the quaternionic hyperbolic space or of the Cayley hyperbolic plane and $N$ has nonpositive complexified sectional curvature. This last condition is satisfied when $N$ is a  symmetric space of noncompact type. 
\end{remark}

\bigskip
\subsection{A flat torus theorem for parabolic isometries}
In the last section, we include an extension for parabolic isometries of the well-known flat torus theorem \cite[Chapter II.7]{MR1744486}. This extension allows us to obtain a rigidity statement easily.\\

Let $\gamma$ be an isometry of a CAT(0) space $X$. The \emph{translation length} of $\gamma$ is the number $|\gamma|=\inf_{x\in X}d(\gamma x,x)$; $\gamma$ is said to be \emph{ballistic} if $|\gamma|>0$ and \emph{neutral} otherwise. Since the infimum in the definition of $|\gamma|$ may or may not be achieved, it is usual to distinguish between \emph{semisimple} isometries, for which the infimum is a minimum, and \emph{parabolic} isometries, for which the infimum is not a minimum. Let $\varphi\colon G\to$Isom$(X)$ be a homomorphism. We say that $G$ \emph{acts by ballistic isometries} on $X$ if $\varphi(g)$ is a ballistic isometry for any $g\neq e$. \\

A CAT(0) space $X$ is said to be $\pi$-\emph{visible} if any points $\xi,\eta\in\bo X$ that satisfy $\angle(\xi,\eta)=\pi$ are extremities of a geodesic line. For example, Hilbert spaces, Euclidean buildings and symmetric spaces of noncompact type are $\pi$-visible. Let $H$ be a subset of Isom$(X)$, we denote by $\mathcal{Z}_{\mathrm{Isom}(X)}(H)$ the centralizer of $H$, that is the set of elements in Isom$(X)$ that commute with all elements in $H$.
\begin{theorem}\label{ftt}Let $X$ be a complete $\pi$-visible CAT(0) space  and let $A$ an abelian free group of rank $n$ acting by ballistic isometries on $X$.

Then there exists a $A$-invariant closed convex subspace $Y\subseteq X$. The space $Y$ decomposes as $Z\times\R^n$, $\mathcal{Z}_{\textrm{Isom}(X)}(A)$ preserves this decomposition and the action $\mathcal{Z}_{\textrm{Isom}(X)}(A)\action Y$ is diagonal. 

Moreover, the action $A\action\R^n$ is given by a lattice of $\R^n$ acting by translations and for any $a\in A$, the action of $a$ on $Z$ is neutral. 
\end{theorem}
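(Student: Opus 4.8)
The plan is to induct on the rank $n$, the key case being $n = 1$, that of a single ballistic isometry $a$. First I would attach to $a$ a canonical pair of fixed points at infinity. Since $d(a^k x, a^k y) = d(x,y)$ is bounded in $k$, once one knows that $(a^k x)_{k \ge 0}$ converges in $\bar X$ for one $x$ it converges, to the same point $\xi_a^+ \in \bo X$, for every $x$; the convergence itself, together with the Busemann identity $b_{\xi_a^+}\circ a = b_{\xi_a^+} - |a|$, is essentially Karlsson's theorem on sublinear tracking of orbits of semicontractions of complete $\CAT(0)$ spaces (it is here that $|a| > 0$ is used). Running the same argument for $a^{-1}$ produces $\xi_a^- = \lim a^{-k}x$ with $b_{\xi_a^-}\circ a = b_{\xi_a^-} + |a|$. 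Comparing the nearly degenerate Euclidean triangles with vertices $x_0,\ a^k x_0,\ a^{-k}x_0$ — whose side lengths are $k|a| + o(k)$, $k|a| + o(k)$, $2k|a| + o(k)$ by subadditivity and sublinear tracking — shows that $\angle(\xi_a^+,\xi_a^-) = \pi$, so $\pi$-visibility yields a geodesic line from $\xi_a^-$ to $\xi_a^+$. I then pass to the parallel set $P_a := \{x : b_{\xi_a^+}(x) + b_{\xi_a^-}(x) = \min\}$, a nonempty closed convex subspace which splits canonically and isometrically as $Z_a \times \R$ with the $\R$-factor running towards $\xi_a^\pm$. Fixing $\xi_a^\pm$, the isometry $a$ preserves $P_a$ and its splitting; the Busemann identity forces its $\R$-component to be translation by exactly $|a|$, and since translation lengths of product isometries add in quadrature its $Z_a$-component $\phi_a$ satisfies $|\phi_a|^2 = |a|^2 - |a|^2 = 0$, i.e. $a$ acts neutrally on $Z_a$. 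Finally any $g$ commuting with $a$ fixes $\xi_a^\pm = \lim a^{\pm k}x$, hence preserves $P_a$ and its splitting and so acts diagonally on $Z_a \times \R$; this settles $n = 1$.

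For the inductive step I would fix a basis $a_1, \dots, a_n$ of $A$ and apply the above to $a_1$, getting $P_{a_1} = Z_1 \times \R$. Since $A$ is abelian, $A \subseteq \mathcal Z_{\isom(X)}(a_1)$, so $A$ preserves the splitting: write each $a \in A$ as $(\phi_a,\ \text{translation by }\tau(a))$ on $Z_1 \times \R$. The map $\tau\colon A \to \R$ is a homomorphism with $\tau(a_1) = |a_1| \neq 0$, so its kernel $A_0$ is a proper direct summand; for $a \in A_0$ the action on $P_{a_1}$ is $(\phi_a, \Id_\R)$, whence $|\phi_a|_{Z_1} = |a|_{P_{a_1}} \ge |a|_X > 0$ and $a \mapsto \phi_a$ identifies $A_0$ with a free abelian group of strictly smaller rank acting by ballistic isometries on $Z_1$, which is again complete $\CAT(0)$ and, being obtained from a convex subspace of $X$ by splitting off a Euclidean factor, again $\pi$-visible. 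By the induction hypothesis there is a closed convex subspace $Y_0 = Z' \times \R^m$ of $Z_1$ invariant under $\phi(A_0)$ on which $\phi(A_0)$ acts by a lattice of translations on $\R^m$ and neutrally on $Z'$, the centralizer of $\phi(A_0)$ in $\isom(Z_1)$ respecting the decomposition. As $\phi(A)$ centralizes $\phi(A_0)$, it preserves $Y_0$ and its decomposition, so $A$ preserves $Y := Y_0 \times \R \subseteq P_{a_1}$ and acts on $Z' \times \R^m \times \R$ with Euclidean part a group of translations and $Z'$-part a homomorphism $\psi\colon A \to \isom(Z')$; restricting to a minimal such $Y$ makes every $\psi(a)$ neutral (otherwise the $\psi(A)$-invariant parallel set of $\psi(a)$ inside $Z'$ would furnish a strictly smaller one), and the construction is canonical enough that $\mathcal Z_{\isom(X)}(A)$ preserves it and acts diagonally.

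The step I expect to be the main obstacle is the bookkeeping in this last paragraph: collecting the one-dimensional factors peeled off at the successive stages into a single Euclidean factor of the correct dimension, and verifying that the resulting translation action of $A$ on it is by a lattice — i.e. controlling the rank of the image of the translation character $\tau$ and the discreteness of the combined translation group — all while keeping the action on the complementary factor neutral. By contrast the $\CAT(0)$ geometry of the $n = 1$ case, though it rests on the (nontrivial) tracking theorem for semicontractions, is essentially standard once that input is granted.
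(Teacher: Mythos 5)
Your rank-one case is essentially the paper's Proposition \ref{prop}: the same two canonical fixed points at infinity, the same subspace $Y$ (your parallel set is exactly the union of geodesics joining $\omega_{a^{-1}}$ to $\omega_a$), the same flat-strip splitting $Z\times\R$, the same quadrature identity forcing the $Z$-component to be neutral (using, as the paper does, that the translation length on the invariant closed convex subspace equals $|a|$), and the same observation that anything commuting with $a$ fixes the two points at infinity and hence preserves $Y$ and its splitting. The only difference is that you re-derive the existence of $\omega_a,\omega_{a^{-1}}$, the angle $\pi$, and the Busemann identity from Karlsson's tracking theorem, where the paper cites them; that part is fine.

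The genuine gap is in the inductive step, and it is precisely the point you set aside as ``bookkeeping''. Your subgroup $A_0=\ker\tau$ need not have corank one: commutativity gives no control over the subgroup $\tau(A)\subseteq\R$, which can have rank $k\geq 2$ and be indiscrete (translation parts $1$ and $\sqrt2$ along the $\R$-factor, say). In that situation your induction only yields $Z'\times\R^{\,n-k+1}$, and the patch you sketch does not close the hole: in a general complete $\CAT(0)$ space (no local compactness or finite telescopic dimension is assumed here) there is no reason a \emph{minimal} invariant closed convex $Y$ should exist, its invariance under $\mathcal{Z}_{\isom(X)}(A)$ is not established, and even if all the $Z'$-components were neutral this would not manufacture the $k-1$ missing Euclidean directions, nor the discreteness needed for the word ``lattice''. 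The paper organizes the induction differently exactly to control this: it sets $N=\{b\in A:\ b\ \text{is neutral on}\ Z_1\}$ (a subgroup by Lemma \ref{add}), argues that $N$ acts properly by translations on the $\R$-factor and is therefore cyclic, hence $N=a\Z$ for the chosen primitive $a$, and then inducts on a complement $B\cong\Z^{n-1}$, which acts by ballistic isometries on $Z_1$ (itself $\pi$-visible by Lemma \ref{lem}); the Euclidean rank $n$ and the lattice-of-translations statement come out of that choice, not out of $\ker\tau$. So what you call the main obstacle is the substantive content of the inductive step, and your proposal contains no argument for it. (Two smaller points you would also need to record: that the $\R^m$-components of the $\phi(a)$, $a\in A$, are genuine translations, which follows because they commute with the spanning lattice of translations coming from $\phi(A_0)$; and the $\pi$-visibility of the convex subspaces you induct on, which is the paper's Lemma \ref{lem}.)
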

\begin{corollary}\label{rr}Let $\Gamma$ be a lattice in a semi-simple Lie group of real rank $r$. If $\Gamma$ acts by ballistic isometries on a symmetric space of nonpositive curvature $X$ then $r\leq$rank$(X)$.
\end{corollary}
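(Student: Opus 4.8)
The plan is to deduce this quickly from Theorem \ref{ftt} by exhibiting an $r$-dimensional flat inside $X$. The only non-formal ingredient is the following classical fact: a lattice $\Gamma$ in a semi-simple Lie group $G$ of real rank $r$ always contains a free abelian subgroup $A$ of rank $r$. For a uniform $\Gamma$ this follows directly from the classical flat torus theorem applied to the compact locally symmetric space $\Gamma\backslash X_G$ (a maximal flat of $X_G$ descends to a flat $r$-torus, whose fundamental group is the desired $\Z^r\leq\Gamma$); in general it is a theorem of Prasad and Raghunathan. Fix such an $A\simeq\Z^r$.

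Now I would check that $A\action X$ satisfies the hypotheses of Theorem \ref{ftt}. By hypothesis every nontrivial element of $\Gamma$ is a ballistic isometry of $X$, so $A$ acts by ballistic isometries. A symmetric space of nonpositive curvature is a Hadamard space, hence a complete CAT(0) space, and it is $\pi$-visible: this is recorded before Theorem \ref{ftt} for symmetric spaces of noncompact type, and a possible Euclidean de Rham factor changes nothing, since Hilbert spaces are $\pi$-visible and $\pi$-visibility is inherited by products. Applying Theorem \ref{ftt} to $A\action X$ produces a nonempty $A$-invariant closed convex subspace $Y\subseteq X$ that splits isometrically as $Z\times\R^r$. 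Fixing $z\in Z$, the slice $\{z\}\times\R^r$ is a closed convex subset of $X$ isometric to $\R^r$, that is, an $r$-dimensional totally geodesic flat submanifold of $X$. By the very definition of the rank of a symmetric space, $r\leq\mathrm{rank}(X)$, which is the claim.

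The corollary is therefore a direct consequence of Theorem \ref{ftt}, and there is no substantial obstacle beyond that theorem itself; the single input it does not contain is the existence of a rank-$r$ free abelian subgroup of $\Gamma$, which is classical. The one mild point to keep an eye on is that Theorem \ref{ftt} needs the \emph{whole} group $A$ — not merely one chosen element — to act by ballistic isometries, but this is exactly what the hypothesis ``$\Gamma$ acts by ballistic isometries'' guarantees.
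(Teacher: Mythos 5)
Your proposal is correct and follows essentially the same route as the paper: obtain a free abelian subgroup of rank $r$ in $\Gamma$ (the paper cites Prasad--Raghunathan, as you do), note that it acts by ballistic isometries, and apply Theorem \ref{ftt} to extract an $r$-dimensional flat, giving $r\leq\mathrm{rank}(X)$. Your extra checks ($\pi$-visibility, including a possible Euclidean factor, and that \emph{every} nontrivial element of $A$ is ballistic) are points the paper leaves implicit, but they do not constitute a different argument.
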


\begin{ack}The author thanks Pierre Pansu for suggesting this approach to superrigidity in infinite dimension and thanks Pierre Py for  pleasant and useful discussions about regularity of harmonic maps in infinite dimension.
\end{ack}
\section{Harmonic maps}
In this section, we recall the standard notions of totally geodesic maps and harmonic maps between Riemannian manifolds (maybe of infinite dimension). We refer to \cite{MR1896863}, among others, for an introduction to these notions in finite dimension.\\

Let $(M,g)$ be a smooth Riemannian manifold with Levi-Civita connection $\nabla$. Let $u$ be a chart form an open subset $U\subset M$ to an open subset $V$ of a Hilbert space $\mathcal{H}$. The restriction to $U$ of any vector field $X\in\Gamma(TM)$ can be thought as a smooth map $V\to\mathcal{H}$ and thus we can consider the differential $DX$ of $X$ as a linear map from $\mathcal{H}$ to itself. The \emph{Christoffel symbol} of $\nabla$, $\Gamma(u)$ with respect to $u$, is defined by the relation
\[\nabla_YX=D_YX+\Gamma(u)(X,Y),\]
see \cite[1.5]{MR1330918}. Let $f$ be a smooth map between Riemannian manifolds $(M,g)$ and $(N,h)$ with Levi-Civita connections $^M\nabla$ and $^N\nabla$. The vector bundle $f^{-1}TN$, which is the vector bundle over $M$ with fibers $T_{f(x)}N$ for $x\in N$, is endowed with the connection induced from $^N\nabla$, which we denote also by $^N\nabla$. In charts $(u,U),(v,V)$ of $M$ such that $f(U)\subset V$, this connection is given by the formula
\[^N\nabla_XY=D_{dfX}Y+\Gamma(v)(dfX,Y)\]
for $X\in\Gamma(TM)$ and $Y\in\Gamma(f^{-1}TN)$. The vector bundle $TM^*$ is also endowed with a connection $^M\nabla^*$ induced from $^M\nabla$. For $\omega\in\Gamma(TM^*)$ and $X,Y\in\Gamma(TM)$, 
\[^M\nabla^*_X\omega(Y)=X\cdot\omega(Y)-\omega(\nabla_XY).\]
 
The vector bundle $f^{-1}TN\otimes TM$ is endowed with the connection $\nabla\colon\Gamma(TM^*\otimes f^{-1}TN)\to\Gamma(TM^*\otimes TM^*\otimes f^{-1}TN)$ induced by $^M\nabla^*$ and $^N\nabla$. This connection is defined by the formula 
\[\nabla_X(\omega\otimes V)= ^M\negthinspace\nabla^*_X\omega\otimes V+\omega\otimes ^N\negthinspace\nabla_XV\]
for $X\in\Gamma(TM)$, $\omega\in\Gamma(TM^*)$ and $V\in\Gamma(f^{-1}TN)$.\\

The differential $df$ of  a smooth map $f$ is a section of  $TM^*\otimes f^{-1}TN$ and $f$ is called \emph{totally geodesic} if  $\nabla df=0$. One can think of this property in two equivalent ways. A map $f$ is totally geodesic if and only if it preserves the connections, that is $^N\nabla_XdfY=df(^M\nabla_XY)$ for $X,Y\in\Gamma(TM)$. And  $f$ is totally geodesic if and only if it maps geodesics to geodesics.\\

When $M$ is finite dimensional, there is a more general notion. Let $\tau(f)$ be the trace of $\nabla df$. It is a section of the vector bundle $TM^*\otimes TM^*\otimes f^{-1}TN$ defined by 
\[\tau(f)=\sum_{i}\nabla df(e_i,e_i)\]
for any orthonormal base $(e_i)$ of $T_xM$. The map $f$ is \emph{harmonic} if $\tau(f)=0$. Harmonic maps are important because they are solutions of a variational problem. Let $||\ ||$ be the norm associated to the Riemannian metric $g\otimes h$ on $TM^*\otimes f^{-1}TN$. Actually for $x\in M$, $||d_xf||$ is the Hilbert-Schmidt norm of the linear map $d_xf\colon T_xN\to T_{f(x)}N$. This norm is well defined because $T_xM$ is finite dimensional. If $M$ is complete and has finite Riemannian volume, the \emph{energy} of $f$ is 
\[E(f)=\int_M||df||^2.\]
Harmonic maps are exactly critical points of the energy. There exists an equivariant version of this variational problem. Let $\Gamma$  the fundamental group of a compact Riemann manifold $M$ acting by isometries on $N$ and let $f$ be a $\Gamma$-equivariant map $f\colon \tilde{M}\to N$. Since $||df||^2$ is $\Gamma$-invariant, one can define the energy of $f$ by $E(f)=\int_M||df||^2.$ In the case where $N$ is finite dimensional and non positively curved, the existence of equivariant harmonic maps was considered in \cite{MR965220,MR1049845}.
\section{Harmonic maps for metric spaces targets}
In \cite{MR1266480,MR1483983}, Korevaar and Schoen developed a theory of harmonic maps with metric spaces targets (Jost developed also a similar theory, see \cite{MR1451625} or \cite[8.2]{MR2829653}). We recall the definitions (not in full generality but in a framework convenient to our purpose) and refer to the original papers for details.\\

Let $(\Omega,\mu)$ be a standard measure space with finite measure and let $(X,d)$ be a complete separable metric space with base point $x_0$. The space $L^p(\Omega,X)$ for $1\leq p\leq \infty$ is the space of measurable maps $u\colon \Omega\to X$ such that $\int_\Omega d\left(u(\omega),x_0\right)^pd\mu(\omega)$. This space is a complete metric space with distance satisfying $d(u,v)^p=\int_\Omega d\left(u(\omega),v(\omega)\right)^pd\mu(\omega)$ and if $(X,d)$ is CAT(0) then so is $L^2\left(\Omega,X\right)$.\\

Let $\Gamma$ be the fundamental group of a compact Riemannian manifold $(M,g)$ and let $\rho\colon \Gamma\to$Isom$(X)$ be a representation of $\Gamma$. The group $\Gamma$ acts by deck transformations on the universal covering $\tilde{M}$ of $M$. We denote by $L^p_\rho(\tilde{M},X)$ the space of measurable equivariant map $u\colon\tilde{M}\to X$ such that the restriction of $u$ to a compact fundamental domain $K\subset \tilde{M}$ is in $L^p(K,X)$. For two maps $u,v\in L^p_\rho(\tilde{M},X)$, the function $x\mapsto d(u(x),v(x))$ is $\Gamma$-invariant and thus can be seen as a function on $M$. The distance on  $ L^p_\rho(\tilde{M},X)$ is given by the relation  $d(u,v)^p=\int_Md(u(x),v(x))^pd\mu(x)$ where $\mu$ is the measure associated to the Riemannian metric $g$.\\

For $u\in L^p_\rho(\tilde{M},X)$ and $\varepsilon>0$ the $\varepsilon$-approximate energy at $x\in \tilde{M}$ is defined by
\[e_\varepsilon(x)=\int_{S(x,\varepsilon)}\frac{d(u(x),u(y))^p}{\varepsilon^p}d\sigma(y)\]
where $S(x,\epsilon)$ is the $\varepsilon$-sphere around $x$ and $d\sigma$ is the measure induced by $g$ on $S(x,\varepsilon)$ divided by $\varepsilon^{(\mathrm{dim}(M)-1)}$. Now $u$ is said to have \textit{finite energy} if $e_\varepsilon$ converges weakly to a density energy $e$, which is absolutely continuous with respect to $d\mu$ and has finite $L^1$-norm, when $\varepsilon$ goes to $0$. In this case, the energy of $u$ is $E(u)=\int_Me(x)d\mu(x)$. A minimizer of the energy functional is called a \textit{harmonic map}.

\bigskip
In \cite[Theorem 2.3.1]{MR1483983}, Korevaar and Schoen proved the existence of an equivariant harmonic map when the target is a CAT(-1) space under the assumption there is no fixed point at infinity. Actually, a Gromov-hyperbolic metric space, for example a CAT(-1) space, is nothing else than a metric space of  telescopic dimension, or rank, at most 1 \cite[Introduction]{MR2558883}. In the unpublished paper \cite{ks99},  the analog  in the higher rank (but finite !) case is proved. We include the original argument.
\begin{theorem}[{\cite[Theorem 1]{ks99}}]\label{existence}
Let $\Gamma$ be the fundamental group of a compact Riemannian manifold $M$ with universal covering $\tilde{M}$ and let $X$ be a complete CAT(0) space of finite telescopic dimension. If $\Gamma$ acts by isometries on $X$ without fixed point at infinity then there exists a unique equivariant harmonic map $f\colon\tilde{M}\to X$. Moreover, this harmonic map is Lipschitz.
\end{theorem}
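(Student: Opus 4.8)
The plan is to produce the harmonic map as a limit of an energy-minimizing sequence and to use the finite telescopic dimension hypothesis precisely where compactness is needed. First I would fix a compact fundamental domain $K\subset\tilde M$ and work in the complete CAT(0) space $L^2_\rho(\tilde M,X)$; since $X$ is CAT(0), so is this space. Let $E_0=\inf\{E(u):u\in L^2_\rho(\tilde M,X)\text{ of finite energy}\}$ and pick a minimizing sequence $(u_n)$. The key analytic input from Korevaar--Schoen is an interior a priori estimate: the $\varepsilon$-approximate energy densities are controlled, and on any ball compactly contained in a fundamental domain one gets a uniform modulus-of-continuity estimate for the $u_n$ in terms of their energies (this is the standard Lipschitz bound, coming from the monotonicity of the scaled energy on balls and the subharmonicity of $x\mapsto d(u_n(x),u_n(y))^2$). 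So the family $(u_n)$ is uniformly locally Lipschitz, hence equicontinuous on $K$.

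The main obstacle is that equicontinuity on $K$ does not by itself give subconvergence of $u_n$, because the target $X$ is not locally compact: the Arzel\`a--Ascoli argument fails. This is exactly the role of finite telescopic dimension. I would run the standard argument à la Korevaar--Schoen / Jost: if $(u_n)$ does not subconverge in $L^2_\rho(\tilde M,X)$, then after passing to a subsequence the images $u_n(x_0)$ (for a fixed basepoint $x_0\in\tilde M$) escape to infinity in $X$, or more precisely the sets $Y_n=\overline{\mathrm{conv}}\,\{u_n(x):x\in K\}$ have diameters bounded (by the uniform Lipschitz bound and compactness of $K$) while the ``centers'' drift off. One then considers, for a nonconverging minimizing sequence, the associated sequence of closed convex sets or the ultralimit; using that the CAT(0) space has telescopic dimension less than $n$, the circumradius of large bounded sets is controlled by Jung's inequality at large scale, which forces the existence of a well-defined limiting behaviour: either the sequence of circumcenters $\mathrm{cent}(Y_n)$ stays bounded, giving the desired strong subconvergence after recentering, or the ``escape direction'' produces a point $\xi\in\bo X$ fixed by $\Gamma$, contradicting the no-fixed-point-at-infinity hypothesis. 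The finite telescopic dimension is what guarantees that the asymptotic escape is captured by a single boundary point (rather than a higher-dimensional spray of directions), so that $\Gamma$-equivariance of the $u_n$ forces $\Gamma$ to fix it.

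Once strong $L^2$-subconvergence $u_n\to f$ is obtained, lower semicontinuity of the energy functional (again Korevaar--Schoen) gives $E(f)\le\liminf E(u_n)=E_0$, so $f$ is a harmonic map; it inherits the uniform Lipschitz bound of the $u_n$, hence is Lipschitz. For uniqueness, I would use convexity of the energy along the geodesic homotopy in the CAT(0) space $L^2_\rho(\tilde M,X)$: if $f_0,f_1$ are two harmonic maps, the midpoint map $f_{1/2}$ satisfies the parallelogram-type inequality $E(f_{1/2})\le\frac12 E(f_0)+\frac12 E(f_1)-c\,d(f_0,f_1)^2$ with $c>0$ coming from the CAT(0) comparison applied pointwise together with a Poincar\'e inequality on $M$; minimality then forces $d(f_0,f_1)=0$. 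I would present the Korevaar--Schoen argument of \cite{ks99} in this form, with the telescopic-dimension/Jung-inequality step (originally an FR-space argument) replaced by the Caprace--Lytchak formulation in \cite{MR2558883}.
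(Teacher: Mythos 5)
Your overall scaffolding (variational argument, no-fixed-point-at-infinity used to rule out escape, uniqueness via convexity of energy plus a Poincar\'e inequality) is in the right spirit, but the central step --- where finite telescopic dimension actually does its work --- is not correct as sketched. First, the convergence horn of your dichotomy does not hold: since $X$ is not locally compact, boundedness of the sets $Y_n$ (or of their circumcenters) gives no subconvergence whatsoever, and Arzel\`a--Ascoli is unavailable for exactly the reason you yourself point out. The paper never extracts a convergent subsequence by compactness; instead it pins the value of the maps at a basepoint $x_0$ and shows that a minimizing sequence with fixed $u(x_0)=x$ is \emph{Cauchy}, via the quantitative convexity inequality (CI) of Korevaar--Schoen combined with the Poincar\'e inequality of Lemma \ref{poincaré} applied to $d(u,v)$, which vanishes at $x_0$ (this is Inequality (\ref{ineq})). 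Second, the escape horn is also gapped: without local compactness an unbounded sequence $u_n(x_0)$ need not converge to any point of $\bo X$ (the bordification is not compact), so ``the escape direction produces a $\Gamma$-fixed $\xi\in\bo X$'' does not follow from your setup. The paper's mechanism is different: one defines $I(x)=E(f_x)$, where $f_x$ is the unique minimizer among $L$-Lipschitz equivariant maps with value $x$ at $x_0$, shows $I$ is convex, lower semicontinuous and $\Gamma$-invariant, and then applies Proposition 4.4 of \cite{Duchesne:2011fk} to the filtering family of sublevel sets: in finite telescopic dimension such a family either has nonempty intersection or admits a canonical \emph{center of directions} at infinity, which would be $\Gamma$-fixed. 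That canonical-point construction (Caprace--Lytchak style) is where the Jung-type inequality really enters, not through circumcenters of the images $u_n(K)$.

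A further, more minor, gap: you assert a uniform Lipschitz bound for an arbitrary minimizing sequence from ``interior estimates,'' but those estimates are for (local) minimizers, not for arbitrary finite-energy competitors. The paper avoids this by invoking Theorem 2.6.4 of \cite{MR1266480} to produce \emph{some} $L$-Lipschitz equivariant map of finite energy and then minimizing inside the closed convex class $\mathcal{C}_L$ of $L$-Lipschitz equivariant maps, so the Lipschitz bound is imposed on the class rather than derived along the sequence; this also delivers the Lipschitz regularity claimed in the statement. Finally, your uniqueness inequality $E(f_{1/2})\le\tfrac12E(f_0)+\tfrac12E(f_1)-c\,d(f_0,f_1)^2$ is not the Korevaar--Schoen inequality: the correction term there involves $\int_M\|\nabla d(f_0,f_1)\|^2$, and converting it into $d(f_0,f_1)^2$ requires precisely the Poincar\'e input (a normalization such as vanishing at $x_0$), which is how the paper organizes the argument.
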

\begin{proof} For $L>0$, let $\mathcal{C}_L$ be the set of $\Gamma$-equivariant maps from $\tilde{M}$ to $X$ that are $L$-Lipschitz and have finite energy. Thanks to Theorem 2.6.4 in \cite{MR1266480}, we fix $L>0$ such that $\mathcal{C}_L$ is not empty. We claim that $\mathcal{C}_L$ is a closed convex subset of $L^2_\rho(\tilde{M},X)$. Let $u,v\in L^2_\rho(\tilde{M},X)$ and let $t\mapsto u_t$ be the geodesic segment with endpoints $u$ and $v$. If $u,v$ are $L$-Lipschitz, then the convexity of distance function on $X$ \cite[Proposition II.2.2]{MR1744486} shows that $u_t$ is also convex for any $t$. Now, the L$^2$-convergence of a sequence with a common Lipschitz bound implies the uniform convergence of this sequence and since a uniform limit of a sequence of $L$-Lipschitz maps is also $L$-Lipschitz, we obtain that $\mathcal{C}_L$ is a closed convex subset of $L^2_\rho(\tilde{M},X)$.\\

Let $x_0\in \tilde{M}$ and let $X'=\{x\in X|\ u(x_0)=x,\ u\in\mathcal{C}_L\}$. The convexity of $\mathcal{C}_L$ implies that $X'$ is a convex subset of $X$. We want to show that for any $x\in X'$, there exists a unique map $u\in\mathcal{C}_L$ that minimizes the energy among maps in $\mathcal{C}_L$ such that $u(x_0)=x$. Let $u,v\in\mathcal{C}_L$ such that $u(x_0)=v(x_0)=x\in X_0$ then we have
\begin{equation}\label{ineq}
\int_Md(u,v)^2\underset{(\mathrm{PI})}{\leq}C\int_M||\nabla d(u,v)||^2\underset{(\mathrm{CI})}{\leq}C\left[\frac{1}{2}\left(E(u)+E(v)\right)-E(m)\right]
\end{equation}
where $C$ is some positive number and $m$ is the midpoint of the segment $[u,v]$. Actually, Inequality (PI) is a Poincaré inequality (Lemma \ref{poincaré}) for the function $d(u,v)$, which is $2L$-Lipschitz and vanishes at $x_0$, and Inequality (CI) is \cite[Inequality (2.6ii)]{MR1266480}. Inequality (\ref{ineq}) shows that an energy minimizing sequence $(u_n)$ with $u_n(x_0)=x$ for any $n$ is Cauchy and thus an energy minimizing map in $\{u\in\mathcal{C}_l\ |\ u(x_0)=x\}$ exists and is unique. Let us denote by $f_x$ this map.\\ 

We define $I(x)$ to be $E(f_x)$. Now, we aim to show that $I\colon X'\to\R^+$ is a convex lower semicontinuous function. Assume this is the case, since $I$ is $\Gamma$-invariant and lower semicontinuous, its lower levelsets $X_r:=\{x\in X'|\ I(x)\leq r\}$ are $\Gamma$-invariant closed convex subsets of $X$. Proposition 4.4 in \cite{Duchesne:2011fk} implies that the intersection $\cap_{r>\inf I}I_r$ is non empty otherwise the center of directions associated to $\{I_r\}_{r>\inf I}$ would be a  $\Gamma$-fixed point at infinity. Since $\cap_{r>\inf I}I_r\neq\emptyset$, there is an energy minimizing $\Gamma$-equivariant map, which is unique thanks to Inequality (\ref{ineq}).\\

From the convexity of $E$, it is clear that $I$ is also convex. Let $r>\inf I$ let $x\in X$ be a limit point of a sequence $(x_n)$ in $X_r$. It suffices to show that $f_n:=f_{x_n}$ is a Cauchy sequence in $\mathcal{C}_L$ to obtain that $I_r$ is closed. Let $I_n=\inf_{X_0\cap B(x,1/n)} I$. We may assume that $x_n\in X_0\cap B(x,1/n)$ and $E(f_n)\leq I_n+1/n$. Now, Inequality (CI) in (\ref{ineq}) implies that 
\[\int_M|\nabla d(f_n,f_m)|^2\underset{n,m\to \infty}{\longrightarrow}0\]
and Lemma \ref{poincaré} applied to the function $d(f_n,f_m)-d(f_n(x_0),f_m(x_0))$ allows us conclude that $(f_n)$ is a Cauchy sequence.
\end{proof}

\begin{lemma}\label{poincaré}If $f\colon M\to\R$ if a $L$-Lipschitz function that vanishes at some point $x_0\in {M}$ then there exists $C>0$ which depends only on ${M}$ and $L$, such that
\[\int_Mf^2\leq C\int_M ||\nabla f||^2.\]
\end{lemma}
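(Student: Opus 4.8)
The plan is to reduce this to the classical Poincaré--Wirtinger inequality on the fixed compact manifold $M$, using the Lipschitz bound only to control the mean value. Write $\overline{f}=\frac{1}{\mu(M)}\int_M f\,d\mu$. Since $M$ is compact and connected there is $C_0=C_0(M)$ with $\int_M(f-\overline{f})^2\le C_0\int_M\|\nabla f\|^2$, and because $f-\overline{f}$ is $L^2$-orthogonal to the constants this gives
\[
\int_M f^2=\int_M(f-\overline{f})^2+\mu(M)\,\overline{f}^{\,2}\le C_0\int_M\|\nabla f\|^2+\mu(M)\,\overline{f}^{\,2}.
\]
So everything reduces to producing $C_1=C_1(M,L)$ with $\overline{f}^{\,2}\le C_1\int_M\|\nabla f\|^2$, after which one may take $C=C_0+\mu(M)\,C_1$, depending only on $M$ and $L$.

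The Lipschitz hypothesis enters when estimating $\overline{f}$. As $f(x_0)=0$, the crude bound $|\overline{f}|\le\frac{1}{\mu(M)}\int_M|f(y)-f(x_0)|\,d\mu(y)\le L\cdot\frac{1}{\mu(M)}\int_M d(y,x_0)\,d\mu(y)\le L\,\diam(M)$ only yields an affine estimate $\int_M f^2\le C_0\int_M\|\nabla f\|^2+C'(M,L)$, and the real content of the lemma is that the additive constant $C'$ can be removed. A natural route is a compactness argument: the $L$-Lipschitz functions vanishing at $x_0$ are uniformly bounded (by $L\,\diam(M)$) and equicontinuous, so along any sequence with $\int_M\|\nabla f_n\|^2\to 0$ one extracts, by Arzel\`a--Ascoli, a uniform limit $f_\infty$ that is again $L$-Lipschitz with $f_\infty(x_0)=0$ and, by weak lower semicontinuity of the Dirichlet energy along the resulting weak convergence in $W^{1,2}$, satisfies $\int_M\|\nabla f_\infty\|^2=0$, hence $f_\infty\equiv 0$; turning this into the homogeneous inequality on the whole class requires a rescaling and uses that the asserted inequality is invariant under $f\mapsto\lambda f$, and this is the delicate step. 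An alternative, more hands-on route avoids $\overline{f}$ altogether: for a.e.\ $x\notin\mathrm{Cut}(x_0)$ one has $f(x)=\int_0^{d(x_0,x)}\langle\nabla f(\gamma_x(t)),\dot\gamma_x(t)\rangle\,dt$ along the minimizing geodesic $\gamma_x$ from $x_0$, so $|f(x)|^2\le d(x_0,x)\int_0^{d(x_0,x)}\|\nabla f(\gamma_x(t))\|^2\,dt$ by Cauchy--Schwarz; integrating in geodesic polar coordinates around $x_0$ and using that the Jacobian of $\exp_{x_0}$ stays bounded on the relatively compact segment domain reduces the claim to a one-dimensional Hardy-type inequality.

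I expect the main obstacle to be exactly this passage from the affine estimate to the homogeneous one: this is where the finiteness of the Lipschitz constant must be exploited in an essential, non-scale-invariant way --- for a merely $W^{1,2}$ function vanishing at a single point such an inequality has no reason to hold --- and the argument is genuinely sensitive to the geometry of $M$ near $x_0$. The remaining pieces, namely the classical Poincaré inequality and the tracking of the constants in terms of $M$ and $L$, are routine.
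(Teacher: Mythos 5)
Your second, ``hands-on'' route is essentially the paper's own argument: the paper writes $f^2$ as a radial integral from $x_0$ in the exponential chart and passes to polar coordinates, which is exactly your identity $f(x)=\int_0^{d(x_0,x)}\langle\nabla f(\gamma_x(t)),\dot\gamma_x(t)\rangle\,dt$ followed by Cauchy--Schwarz. But in both of your routes you stop at the decisive step and merely flag it (``the passage from the affine estimate to the homogeneous one'', resp.\ ``a one-dimensional Hardy-type inequality''), so as written this is a reduction to precisely the part that carries all of the difficulty, not a proof.

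Moreover that step cannot be carried out, because the homogeneous inequality is false as soon as $\dim M\geq 3$ (a logarithmic variant of the same example also disposes of $\dim M=2$): take $f_\epsilon=\min(\epsilon,d(\cdot,x_0))$, which is $1$-Lipschitz and vanishes at $x_0$; then $\int_M f_\epsilon^2\geq\tfrac12\epsilon^2\,\mathrm{vol}(M)$ while $\nabla f_\epsilon$ vanishes outside $B(x_0,\epsilon)$, so $\int_M\|\nabla f_\epsilon\|^2\leq \mathrm{vol}(B(x_0,\epsilon))\leq c_M\,\epsilon^n$, and the ratio blows up like $\epsilon^{2-n}$; no constant $C(M,L)$ can work. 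The same family shows that the Hardy-type inequality you would need, $\int_0^R\bigl(\int_0^r g\bigr)^2 r^{n-1}dr\leq C\int_0^R g^2\,r^{n-1}dr$, fails for $g=\chi_{[0,\epsilon]}$, and it is also exactly where the paper's own manipulation (replacing $\int_0^R t\,\|\nabla_{tv}f\|\,dt$ by $\int_0^R\|\nabla_{tv}f\|^{n-1}t^{n-1}dt$ via H\"older) breaks down: the gradient may concentrate near $x_0$, where the polar Jacobian $t^{n-1}$ is negligible, and the Lipschitz bound does not prevent this. So your instinct about where the danger lies is exactly right, but the conclusion is that neither your compactness-plus-rescaling scheme (the class of $L$-Lipschitz functions is not scale-invariant) nor the radial computation can yield the stated inequality. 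What does survive, and what the existence proof of Theorem \ref{existence} actually needs at the points where the lemma is invoked, is the qualitative statement: if $g_k$ are uniformly Lipschitz, vanish at $x_0$ and $\int_M\|\nabla g_k\|^2\to0$, then $\int_M g_k^2\to0$. This follows from your Arzel\`a--Ascoli argument with no rescaling at all (any uniform limit is Lipschitz with vanishing Dirichlet energy, hence constant, hence $\equiv0$); alternatively one can aim for a non-homogeneous quantitative bound such as $\int_M f^2\leq C(M,L)\bigl(\int_M\|\nabla f\|^2\bigr)^{2/n}$ extracted from the radial computation by splitting at an energy-dependent scale. Either of these is the statement to prove; the lemma as literally stated is not provable.
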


\begin{proof}Let $R$ be the diameter of $M$. By an abuse of notation, we also denote by $f$ the function $f\circ\exp\colon T_{x_0}M\to\R$ 
and we denote by $\mu$ the pull-back, by the exponential map, of the measure associated to the Riemannian metric on $M$ and we denote by $dx$ the Lebesgue measure on $T_{x_0}M$. The measure $\mu$ is absolutely continuous with respect to $dx$ and there are positive numbers $c$ and $C_1$ such that the density $\upsilon$ of $\mu$ satisfies  $c<\upsilon(x)<C_1$ for any $x$ in the $R$-ball around the origin in $T_{x_0}M$. We have
\[\int_Mf^2\leq\int_{B(0,R)}f^2(x)d\mu(x)\leq C_1\int_{B(0,R)}f(x)^2dx.\] 
Moreover,
\begin{align*}\int_{B(0,R)}f(x)^2dx&=\int_{B(0,R)}\int_0^{||x||}\left.\frac{d}{du}\right|_{u=t}f(ux/||x||)^2\ dt\ dx\\
&\leq \int_{B(0,R)}\int_0^R2f(tx/||x||)\nabla_{\frac{tx}{||x||}}f\cdot\frac{x}{||x||}\ dt\ dx.
\end{align*}
Now, let $n$ be the dimension of $M$  and let $\sigma$ be the Lebesgue measure on $S^{n-1}$. Using polar coordinates, the fact that $||\nabla f||\leq L$ and Hölder inequality, we have for some $C_2>0$

\begin{align*}
\int_Mf^2&\leq 2C_1 L\int_0^R\int_{S^{n-1}}\int_0^Rt\ ||\nabla_{tv}f||\ dt\ d\sigma(v)r^{n-1}dr\\
&\leq C_2\int_{S^{n-1}}\int_0^R ||\nabla_{tv}f||^{n-1} t^{n-1}dt\ d\sigma(v)=C_2\int_{B(0,R)}||\nabla_{x}f||^{n-1}dx.
\end{align*}
Once again, using Hölder inequality and the fact that the exponential map is finite to one on $B(x_0,R)$, we have for some $C_3,C>0$,
\[\int_Mf^2\leq C_3\int_{B(x_0,R)}||\nabla_xf||^2dx\leq C\int_M||\nabla f||^2.\]

\end{proof}

\section{Smoothness}
It is a standard fact that the most difficult part to obtain smoothness of weak harmonic maps is the first regularity step, which is the continuity of the harmonic map (see for example \cite[8.4]{MR2829653}). In our situation, we already know that the harmonic map is Lipschitz and we can easily adapt the argument given in \cite{Delzant:2010fk}, where the target is the infinite dimensional hyperbolic space.
\begin{proposition}\label{smooth}Let $\Gamma$ be the fundamental group of a compact Riemannian manifold $M$ with universal covering $\tilde{M}$ and let $N$ be a simply connected complete Riemannian manifold of nonpositive sectional curvature. If $\Gamma$ acts by isometries on $N$ and $f\colon M\to N$ is a $\Gamma$-equivariant harmonic map in the sense of Korevaar and Schoen then $f$ is a smooth harmonic map.
\end{proposition}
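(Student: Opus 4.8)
The plan is to bootstrap from the Lipschitz regularity already in hand (Theorem~\ref{existence}) to full smoothness by a local elliptic-regularity argument carried out in charts, following the scheme of \cite{Delzant:2010fk}. Since the statement is local on $M$ and $f$ is equivariant, I would work on a small ball $U\subset\tilde M$ whose image lies in a single chart $(v,V)$ of $N$; this reduces the problem to studying a Lipschitz map $f\colon U\to V\subseteq\mathcal H$ with $U$ a ball in $\R^n$ equipped with a smooth Riemannian metric $g$. The Korevaar--Schoen harmonicity must first be translated into a weak PDE: I would invoke the standard fact (see \cite{MR1266480,MR1483983}) that a finite-energy minimizer into a Riemannian manifold, viewed in normal-type coordinates, satisfies the harmonic map system
\[
\Delta_g f^k + g^{ij}\,\Gamma(v)^k_{\alpha\beta}(f)\,\partial_i f^\alpha\,\partial_j f^\beta = 0
\]
weakly, where $\Gamma(v)$ are the Christoffel symbols of $h$ in the chart $v$. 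The key point that makes this legitimate in infinite dimension is that $f$ being Lipschitz means $df$ takes values in a bounded subset of $\mathcal H$, so all the quadratic nonlinearities make sense as $L^\infty$ (hence $L^p$ for all $p$) functions with values in $\mathcal H$, and the Christoffel symbols of a smooth Riemannian metric are smooth bounded maps on bounded sets.

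Next I would run the standard elliptic bootstrap, but with the target Hilbert space $\mathcal H$ replacing $\R^m$ throughout; the scalar Calderón--Zygmund and Schauder estimates apply coordinate-wise, but one needs the Hilbert-valued versions. Concretely: $f\in W^{1,\infty}\subset W^{1,p}$ for all $p$, and the right-hand side $F:=-g^{ij}\Gamma(v)^k_{\alpha\beta}(f)\partial_i f^\alpha\partial_j f^\beta$ lies in $L^\infty(U,\mathcal H)$; by $L^p$-theory for the (scalar, variable-coefficient) Laplacian applied in each coordinate, with uniform control because the coefficients of $g$ are smooth, $f\in W^{2,p}_{\mathrm{loc}}(U,\mathcal H)$ for every $p<\infty$. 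Sobolev embedding (again coordinate-wise, or the Hilbert-valued Morrey estimate) then gives $f\in C^{1,\alpha}_{\mathrm{loc}}$. Now $F\in C^{0,\alpha}_{\mathrm{loc}}(U,\mathcal H)$, so Schauder estimates upgrade $f$ to $C^{2,\alpha}_{\mathrm{loc}}$; differentiating the equation and iterating yields $f\in C^\infty_{\mathrm{loc}}$. Finally, a smooth $L^2$-critical point of the energy is a critical point in the classical sense, so $\tau(f)=0$, i.e.\ $f$ is a smooth harmonic map as defined in Section~2; equivariance is automatic since the construction is local and $f$ was equivariant to begin with.

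The main obstacle — and the reason this is not entirely routine — is ensuring that the elliptic estimates are genuinely available with a separable Hilbert space as target, uniformly in the (infinitely many) coordinate directions. Applying scalar $L^p$ or Schauder estimates componentwise gives, a priori, only a bound depending on the coordinate; what one needs is the vector-valued statement that if $\Delta_g u = F$ with $F\in L^p(U,\mathcal H)$ then $\|u\|_{W^{2,p}(U,\mathcal H)}\le C(\|F\|_{L^p(U,\mathcal H)}+\|u\|_{L^p(U,\mathcal H)})$ with $C$ independent of dimension. This holds because the relevant singular integral operators (the second-derivative Calderón--Zygmund kernels and the Poisson/Schauder kernels) are scalar convolution-type operators acting diagonally, so they extend to $L^p(U)\otimes\mathcal H$ with the same norm by a standard Fubini/approximation argument — this is exactly the point already used in \cite{Delzant:2010fk} for $\mathbf H^\infty$, and I would cite Proposition~7 of \emph{loc.\ cit.}\ and its proof, which adapts verbatim once the chart of $N$ is fixed and the metric coefficients are treated as smooth bounded data. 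A secondary, purely bookkeeping, point is to check that the ball $U$ can be taken small enough that $f(U)$ stays in one chart with uniformly bounded Christoffel symbols; this follows from continuity (Lipschitz) of $f$ and smoothness of $N$.
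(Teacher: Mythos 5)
Your proposal is correct and follows essentially the same route as the paper: translate Korevaar--Schoen harmonicity into the weak harmonic-map system in a chart and bootstrap regularity from the Lipschitz bound via Hilbert-space-valued elliptic estimates, exactly as in Proposition 7 of \cite{Delzant:2010fk}. The only cosmetic difference is that the paper works in the single global chart given by the Cartan--Hadamard theorem, using the comparison $h_y(v,v)\geq\langle v,v\rangle$ to place $f$ in $W^{1,2}(B,\mathcal{H})$, which disposes of your chart-bookkeeping step.
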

\begin{proof}We only sketch the proof with the slight modifications to adapt \cite[Proposition 7]{Delzant:2010fk}. We already know that $f$ is Lipschitz. Choose a point $x\in N$. Since $N$ is a simply connected Riemannian manifold of nonpositive curvature, the Cartan-Hadamard  Theorem \cite[IX.3.8]{MR1666820} implies that the exponential map at $x$ is a diffeomorphism from the tangent space $T_xN$ to $N$. This gives us a global chart and we can think of $N$ as a Hilbert space $(\mathcal{H},<\ ,\ >):=(T_xN,h_x)$ with a non constant Riemannian metric $h$. Moreover, since $N$ has nonpositive sectional curvature, for any $v\in\mathcal{H}$ and any point $y\in N$,
\begin{equation}\label{Rauch}
h_y(v,v)\geq<v,v>,
\end{equation}
see \cite[Theorem IX.3.6]{MR1666820}. In this chart, the covariant derivative can be expressed by
\[\nabla_YX=D_YX+\Gamma(\exp)(X,Y)\]
where $\Gamma(\exp)$ is the Christoffel symbol of this chart. Let $B$ be a ball in $\tilde{M}$ of radius less than the injectivity radius of $M$. This way, the projection $\tilde{M}\to M$ identifies $B$ with a ball in $M$. Consider $f$ as a map from $\tilde{M}$ to $\mathcal{H}$. Inequality (\ref{Rauch}) shows that $f|_B$, which has finite energy for the distance induced by $h$, has finite energy for the one induced by $<\ ,\ >$, too. Thus, $f|_B$ is in the usual Sobolev space (for vector valued maps) $W^{1,2}(B,\mathcal{H})$. Since $f$ is harmonic, it satisfies  the equation
\[\Delta_hf+\sum_{i,j=1}^{\mathrm{dim}(M)}h^{ij}\Gamma(exp)\left(\frac{\bo f}{\bo x_i},\frac{\bo f}{\bo x_j}\right)=0\]
weakly. An induction on $k$ shows that $f|_B$ is in $W^{k,p}(B,\mathcal{H})$ for any $k\in\N$ and $p>1$. This shows that $f$ is actually smooth.
\end{proof}
\section{A vanishing theorem}
Let $\tilde{M}$ be an irreducible symmetric space of noncompact type that is not the real or complex hyperbolic space and let $\Gamma$ be a uniform torsion free lattice of Isom$(\tilde{M})$. In order to prove a geometric statement of superrigidity in the cocompact Archimedean case, Mok, Siu and Yeung proved the existence \cite{MR1223224} of a 4-tensor  $Q$ on $\tilde{M}$ that satisfies  strong conditions.  They also proved the following formula \cite[Theorem 3]{MR1223224} for an equivariant map $f\colon \tilde{M}\to N$ where $N$ is a smooth Riemannian manifold of finite dimension 
\[\int_M\left<\left(Q\circ\sigma_{2\, 4}\right),\nabla df\otimes\nabla df\right>=1/2\int_M\left<Q,f^*R^N\right>.\]
In this formula, $Q\circ\sigma_{2\, 4}(X,Y,Z,T)=Q(X,T,Z,Y)$ and the scalar products are those induced by the Riemannian metrics of $M$ and $N$ on $(T^*M)^{\otimes4}\otimes(f^{-1}TN)^{\otimes2}$ and  $(T^*M)^{\otimes4}$. Actually, the proof of this formula goes through in the case where $N$ has infinite dimension, without modification. This formula, conditions satisfied by $Q$ and the harmonicity of $f$ imply that $\nabla df$ vanishes, that is $f$ is totally geodesic.
\begin{proof}[Proof of Theorem \ref{theorem}]  Let $\tilde{M}$ be the symmetric space associated to $G$. Since $\Gamma$ is a torsion free uniform lattice, the quotient space $\Gamma\backslash \tilde{M}$ is a compact manifold. Since $\tilde{M}$ has no fixed point at infinity of $N$, there exists a equivariant harmonic map $f\colon \tilde{M}\to N$ by Theorem \ref{existence}. Thanks to Proposition \ref{smooth}, we know that $f$ is a smooth equivariant harmonic map.\\

Assume first  that $G$ is simple, that is to say $\tilde{M}$ is irreducible. Now, Mok-Siu-Yeung above argument implies that $f$ is  totally geodesic. Since $\Gamma$ does not fix a point in $N$, $f(N)$ is not reduced to a point. Now, since $M$ is irreducible, $f$ is an isometry up to rescaling the metric on $M$ (see for example \cite{MR0262984}).\\

Now, if $\tilde{M}\simeq\tilde{M}_1\times\dots\times\tilde{M}_n$ with $n\geq2$ then thanks to a Bochner formula \cite[11]{MR1223224}, it is proved that the restriction of $f$ to any fiber $x_1\times\dots\times x_{i-1}\times \tilde{M_i}\times x_{i+1}\times \dots\times x_n$ is harmonic. The irreducibility of $\Gamma$ allows the authors of  \cite{MR1223224} to prove that $f$ is actually totally geodesic and thus $f$ factorizes through
\[ \tilde{M}\overset{\pi}{\longrightarrow}\prod_{i\in I}\tilde{M_i}\overset{f'}{\longrightarrow}N\]
where $I$ is a non empty subset of $\{1,\dots,n\}$, $\pi$ is the canonic projection and $f'$ is an isometry (after renormalization of the metric on each factor $\tilde{M_i}$ for $i\in I$).
\end{proof}
We now explain Remark \ref{complex}. Let $(N,h)$ be a Riemannian manifold with Riemann tensor $R$. Let $X,Y$ be vectors in the complexified tangent space  $T_xN\otimes\C$ at $x\in N$. We also denote by $R$ and $h$ the linear extensions of the Riemann tensor and the metric to the complexification of $T_xN$. The \emph{complexified sectional curvature} between $X$ and $Y$ is  
\[\mathrm{Sec}_\C(X,Y)=\frac{R(X,Y,\overline{X},\overline{Y})}{||X\wedge Y||^2_\C}\]
where $||\ ||_\C$ is the Hermitian norm on $\wedge^2(T_xN\otimes\C)$ induced by $h$, which is the norm associated to the scalar product 
\[ <X\wedge Y,Z\wedge T>_h=\det\left[\begin{array}{cc}
h(X,Z)&h(X,T)\\
h(Y,Z)&h(Y,T)
\end{array}\right]\]
on $\wedge^2T_xN$. The Riemannian manifold $N$ is said to have \emph{nonpositive complexified sectional curvature} if Sec$_\C(X,Y)\leq0$ for any $X,Y\in T_xN\otimes\C$.\\

The result of \cite{MR1223224}, which is the existence of a tensor $Q$ that implies the vanishing of $\nabla df$ for a harmonic map $f$, is true when $N$ has nonpositive  complexified sectional curvature and $G$ is the connected component of the isometry group of the quaternionic hyperbolic space or the Cayley hyperbolic plane. Thus Theorem \ref{theorem} is also true in this case.\\

Let $C$ be the curvature operator as introduced in \cite[3.2]{sym}. We also denote by $C$ its $\C$-linear extension to $T_xN\otimes\C$. For $X,Y\in T_xN\otimes\C$, 
\[\mathrm{Sec}_\C(X,Y)=\frac{<C(X\wedge Y),\overline{X\wedge Y}>_h}{||X\wedge Y||^2_\C}.\]
In the case where $N$ is a symmetric space of noncompact type, $C$ is nonpositive and thus, the complexified sectional curvature is nonpositive. 

\section{A flat torus theorem for parabolic isometries}
We start with some preliminary results.
\begin{lemma}\label{lem}Let $X$ be a $\pi$-visible complete CAT(0) space. If $Y\subseteq X$ is closed and convex then it is also $\pi$-visible.
\end{lemma}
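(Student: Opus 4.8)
The plan is to unwind the definitions of $\pi$-visibility and of the angle metric on the boundary, and to exploit the fact that a closed convex subset $Y$ of a complete CAT(0) space $X$ is itself a complete CAT(0) space whose boundary $\bo Y$ embeds isometrically (for the Tits/angular metric) into $\bo X$. So first I would fix $\xi,\eta\in\bo Y$ with $\angle_Y(\xi,\eta)=\pi$, where $\angle_Y$ denotes the angle computed inside $Y$. The first point to check is that the angle computed inside $Y$ agrees with the angle computed inside $X$: since $Y$ is convex, any geodesic ray in $Y$ is a geodesic ray in $X$, and comparison angles at a basepoint only depend on distances along these rays, so $\angle_X(\xi,\eta)=\angle_Y(\xi,\eta)=\pi$. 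Hence $\xi$ and $\eta$ are endpoints of a geodesic line $c\colon\R\to X$ by $\pi$-visibility of $X$.

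The remaining task is to show that this line $c$ actually lies in $Y$. Here I would use the nearest-point projection $p=\proj{Y}{\cdot}\colon X\to Y$, which is well-defined and $1$-Lipschitz since $Y$ is closed and convex in a complete CAT(0) space \cite[Proposition II.2.4]{MR1744486}. Consider the curve $p\circ c$ in $Y$. Because $p$ is $1$-Lipschitz, $p\circ c$ is a $1$-Lipschitz curve, and because $c$ is a geodesic line with endpoints $\xi,\eta\in\bo Y$ one shows that $p\circ c$ is a quasi-isometric embedding of $\R$ with endpoints $\xi$ and $\eta$ in $\bo Y$; in fact, using that $d(c(s),c(t))=|s-t|$ while $d(p(c(s)),p(c(t)))\le|s-t|$, together with the facts that $p(c(t))\to\xi$ and $p(c(t))\to\eta$ as $t\to\pm\infty$ (projections of points converging to a boundary point of $Y$ converge to that same boundary point), the function $t\mapsto d(p(c(0)),p(c(t)))$ must grow linearly in both directions. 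The cleanest way to finish is to observe that in a CAT(0) space a bi-infinite quasigeodesic whose two ends define boundary points at Tits-angle $\pi$ is at bounded distance from an actual geodesic line, but it is simpler still to argue directly: take the geodesic ray in $Y$ from $p(c(0))$ to $\xi$ and the geodesic ray from $p(c(0))$ to $\eta$; these make angle $\pi$ at $p(c(0))$ (again the angle is the same computed in $Y$ or in $X$, and it is $\le\pi$ while the two rays together with the line $c$ force it to be $\ge\pi$), hence their concatenation is a geodesic line in $Y$ joining $\xi$ to $\eta$.

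So the concrete steps, in order, are: (1) reduce the angle condition in $Y$ to the angle condition in $X$ using convexity; (2) invoke $\pi$-visibility of $X$ to get a line $c$ in $X$ with the prescribed endpoints; (3) use the $1$-Lipschitz projection $p$ onto $Y$ and the continuity of $p$ at the two boundary points $\xi,\eta$ to produce rays in $Y$ toward $\xi$ and toward $\eta$ from a common point; (4) show these rays meet at angle $\pi$, hence concatenate to a geodesic line in $Y$. The main obstacle is step (3)–(4): one must be careful that the nearest-point projection extends suitably to boundary points and that $\proj{Y}{x_n}\to\xi$ whenever $x_n\to\xi\in\bo Y$; this is where one really uses completeness and the CAT(0) inequality, via the fact that the Busemann function associated to $\xi$ restricted to $Y$ still has $\xi$ in the closure of its sublevel sets, so a geodesic ray in $Y$ asymptotic to $\xi$ exists. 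Once such a ray toward $\xi$ and a ray toward $\eta$ are in hand, the angle-$\pi$ condition propagates from $\bo X$ to $\bo Y$ by the equality of angles noted in step (1), and the concatenation argument of \cite[Proposition II.9.21]{MR1744486} applies verbatim inside $Y$.
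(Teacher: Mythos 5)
There is a genuine gap at the crux of your argument. Steps (1)--(3) are fine: the inequality $\angle_Y(\xi,\eta)\leq\angle_X(\xi,\eta)\leq\pi$ is the only direction you need, $\pi$-visibility of $X$ gives the line $c$, and since $Y$ is closed and convex the geodesic rays from $x=\proj{Y}{c(0)}$ asymptotic to $\xi$ and to $\eta$ do lie in $Y$. The problem is step (4): you assert that these two rays make angle $\pi$ at $x$ because ``the two rays together with the line $c$ force it to be $\ge\pi$''. No such forcing exists as a general principle: the Alexandrov angle $\angle_y(\xi,\eta)$ at a point $y$ can be strictly less than $\pi$ even when a geodesic line from $\eta$ to $\xi$ passes nearby --- in $\H^2$ this happens at every point off the line, although the Tits angle (a supremum over basepoints) is $\pi$. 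So the claim can only hold because of the \emph{specific} choice $x=\proj{Y}{c(0)}$, and nothing in your text exploits that choice; proving it is exactly the content of the lemma. The standard way to exploit it is the paper's route: $t\mapsto d(c(t),Y)$ is convex, and it is bounded because the rays from $x$ toward $\xi$ and $\eta$ lie in $Y$ and stay at bounded distance from the two halves of $c$; hence it is constant, and the Sandwich Lemma \cite[II.2.12(2)]{MR1744486} then shows that the convex hull of $c$ and its projection $c'=\proj{Y}{c(\cdot)}$ is a flat strip, so that $c'$ is a geodesic line in $Y$ from $\eta$ to $\xi$ (and, a posteriori, $x=c'(0)$ does see $\xi,\eta$ at angle $\pi$). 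Without some such flatness argument your concatenation step is unsupported.

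Two side remarks. Your ``cleanest way'' fallback --- that a bi-infinite quasigeodesic whose ends are at Tits angle $\pi$ lies at bounded distance from a geodesic --- is false in general CAT(0) spaces, where there is no Morse lemma: the graph of $x\mapsto\sqrt{|x|}$ in $\R^2$ is a quasigeodesic whose two ends converge to opposite points of the Tits boundary, yet it is at unbounded distance from every geodesic line. Similarly, the claim that $\proj{Y}{x_n}\to\xi$ whenever $x_n\to\xi\in\bo Y$ is not something you may use for free; in the paper it comes out only as a consequence of the flat-strip conclusion. So the overall architecture (project onto $Y$, produce asymptotic rays, conclude a line in $Y$) is the right one and matches the paper, but the decisive step must be carried by the convexity-plus-Sandwich-Lemma argument, which is missing from your proposal.
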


\begin{proof}Let $\xi,\eta\in\bo Y$ such that $\angle(\xi,\eta)=\pi$. There exists a geodesic $c\colon\R\to X$ such that $c(\infty)=\xi$ and $c(-\infty)=\eta$. Let $x$ be the projection of $c(0)$ on $Y$. We define $c_+$ (respectively $c_-$) to be the geodesic ray from $x$ toward $\xi$ (respectively $\eta$). By definition of the boundary, $d(c(t),c_+(t))$ and $d(c(-t),c_-(t))$ are bounded for $t\geq0$. The real function $t\mapsto d(c(t),Y)$ is convex bounded and thus constant equal to some $d_0\geq0$. Now let  $c'(t)$ be the projection of $c(t)$ on $Y$ for $t\in\R$. For $s,t\in\R$ and $x\in[c(t),c(s)]$, $d(x,[c'(t),c'(s)])=d_0$. By the same argument as above, for all $x\in[c'(t),c'(s)]$, $d(x,[c(t),c(s)])=d_0$ and we are in position to apply the Sandwich Lemma \cite[II.2.12.(2)]{MR1744486}, which shows that the convex hull of $c(t),c(s),c'(s),c'(t)$ is a Euclidean rectangle. In particular, $c'\colon\R\to Y$ is a geodesic with $c'(\infty)=\xi$ and $c'(-\infty)=\eta$.
\end{proof}
We recall that a ballistic isometry $\gamma$ of a complete CAT(0) space $X$ has two canonical fixed points at infinity, which we denote by $\omega_\gamma$ and $\omega_{\gamma^{-1}}$. They are limit points at infinity of $\gamma^nx$ and $\gamma^{-n}x$ for any $x\in X$ (see \cite[3.C]{MR2574740}, for example).
\begin{proposition}\label{prop}
Let $X$ be a $\pi$-visible complete CAT(0) space and let $\gamma$ be a ballistic isometry of $X$. Then there exists a closed convex subspace $Y\subseteq X$ that splits as $Z\times\R$. Moreover, $\mathcal{Z}_{\mathrm{Isom}(X)}(\gamma)$ preserves $Y$ and acts diagonally. In particular, $\gamma|_Y$ acts as a translation of length $|\gamma|$ along the  factor $\R$. 
\end{proposition}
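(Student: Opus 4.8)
## Proof proposal for Proposition \ref{prop}

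The plan is to build the splitting from the two canonical fixed points $\omega_\gamma$ and $\omega_{\gamma^{-1}}$ of the ballistic isometry $\gamma$. First I would show that these two points are antipodal, i.e. $\angle(\omega_\gamma,\omega_{\gamma^{-1}})=\pi$. This should follow from the ballistic hypothesis: writing $c(n)$ for an orbit-like sequence converging to $\omega_\gamma$ and $c(-n)$ for one converging to $\omega_{\gamma^{-1}}$, the linear rate of escape $|\gamma|>0$ forces the Busemann-type quantities to add up so that the comparison angle at any basepoint tends to $\pi$; one packages this via the fact that $\tfrac1n d(\gamma^n x,\gamma^{-n}x)\to 2|\gamma|$ while $\tfrac1n d(\gamma^n x, x)\to|\gamma|$ and $\tfrac1n d(\gamma^{-n}x,x)\to|\gamma|$, and a CAT(0) comparison argument then gives the Tits angle $\pi$ in the limit. (This is essentially the content of \cite[3.C]{MR2574740}, which I would cite.)

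Next, using $\pi$-visibility of $X$, the antipodal pair $\omega_\gamma,\omega_{\gamma^{-1}}$ bounds a geodesic line $\ell$. By the standard parallel-set construction \cite[II.2.14]{MR1744486}, the union $Y$ of all geodesic lines from $\omega_{\gamma^{-1}}$ to $\omega_\gamma$ is a closed convex subset splitting isometrically as $Z\times\R$, where the $\R$-factor is the direction of $\ell$ and $Z$ can be taken to be a cross-section. I would then verify that $\gamma$ preserves $Y$: since $\gamma$ fixes both $\omega_\gamma$ and $\omega_{\gamma^{-1}}$, it maps any geodesic line joining them to another such line, hence preserves the parallel set $Y$. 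Because $\gamma$ preserves the splitting direction (it fixes the endpoints of the $\R$-factor and cannot swap them, as they are its distinct canonical points), $\gamma$ acts on $Y=Z\times\R$ as $(\gamma_Z,\tau)$ for some isometry $\gamma_Z$ of $Z$ and some translation $\tau$ of $\R$; the translation length of this product is $|\tau|$ if and only if $\gamma_Z$ is neutral on $Z$, and in any case $\gamma$ restricted to $Y$ translates along the $\R$-factor by $|\gamma|$ once one checks $|\gamma|_Y|=|\gamma|$ — which holds because $Y$ contains points realizing the infimal displacement asymptotically, being swept out by the axis geodesics.

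For the centralizer statement: any $g\in\mathcal{Z}_{\mathrm{Isom}(X)}(\gamma)$ commutes with $\gamma$, hence permutes the fixed point set of $\gamma$ at infinity and in particular $g\cdot\omega_\gamma=\omega_{g\gamma g^{-1}}=\omega_\gamma$ and likewise $g\cdot\omega_{\gamma^{-1}}=\omega_{\gamma^{-1}}$. Therefore $g$ sends geodesic lines joining these two boundary points to geodesic lines joining them, so $g$ preserves $Y$. Since $g$ fixes the two endpoints of the $\R$-direction, it preserves the splitting $Y=Z\times\R$ and the uniqueness of the product decomposition (the $\R$-factor being the canonical Euclidean-de Rham factor in the direction $(\omega_{\gamma^{-1}},\omega_\gamma)$) forces $g$ to act diagonally, $g=(g_Z,g_\R)$. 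The main obstacle, I expect, is the first step — proving $\angle(\omega_\gamma,\omega_{\gamma^{-1}})=\pi$ rigorously from ballisticity; once antipodality is in hand the rest is an application of the parallel-set machinery from \cite{MR1744486} together with the observation that commuting isometries fix each other's canonical boundary points.
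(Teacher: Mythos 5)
Your construction is the same one the paper uses: take $Y$ to be the union (parallel set) of all geodesic lines from $\omega_{\gamma^{-1}}$ to $\omega_\gamma$, get nonemptiness from $\pi$-visibility, split $Y\simeq Z\times\R$ by \cite[II.2.14]{MR1744486}, and note that anything commuting with $\gamma$ fixes the two canonical boundary points, hence preserves $Y$ and acts diagonally. Two steps, however, are not actually established in your write-up. First, the sketched ``comparison argument'' for $\overline{\angle}(\omega_\gamma,\omega_{\gamma^{-1}})=\pi$ goes the wrong way: by monotonicity of comparison angles in CAT(0) spaces, for any sequences $x_n\to\xi$, $y_n\to\eta$ in the cone topology one only gets $\angle(\xi,\eta)\leq\liminf_n\overline{\angle}_p(x_n,y_n)$, so knowing that the comparison angles of the orbit points tend to $\pi$ gives no lower bound on the Tits angle. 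Sequences can genuinely overshoot: in a closed horoball of $\mathbb{H}^2$, whose boundary at infinity is a single point $\xi$, the points $(\pm n,1)$ (upper half-plane model) converge to $\xi$ while their comparison angles at a fixed basepoint tend to $\pi/3$, although $\angle(\xi,\xi)=0$. So antipodality of $\omega_\gamma$ and $\omega_{\gamma^{-1}}$ really has to be taken from the finer description of $\omega_\gamma$ (Busemann/half-space characterization) in \cite[3.C]{MR2574740}; citing it, as the paper implicitly does, is fine, but your proposed self-contained argument would not survive being written out.

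Second, the clause that $\gamma|_Y$ translates by exactly $|\gamma|$ along the $\R$-factor (equivalently, that the $Z$-component $\gamma_Z$ is neutral) is asserted rather than proved. The equality $|\gamma|_Y|=|\gamma|$ is correct, but the clean reason is that the nearest-point projection onto the closed convex $\gamma$-invariant set $Y$ is $1$-Lipschitz and $\gamma$-equivariant, not that ``$Y$ contains points realizing the infimal displacement asymptotically''; and in any case this equality only yields $|\gamma_Z|^2+|\tau|^2=|\gamma|^2$, which a priori allows $\gamma_Z$ to be ballistic on $Z$ with the translation along $\R$ strictly smaller than $|\gamma|$. The paper excludes this by a separate argument you are missing: if $|\gamma_Z|>0$, then for $y\in Y$ the orbit $\gamma^ny$ converges to a point of $\bo Y=\bo Z\ast S^0$ lying strictly between $\bo Z$ and the pole, contradicting $\gamma^ny\to\omega_\gamma$, which is the pole of the join. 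Adding that step (or an equivalent one) is necessary to obtain the ``translation of length $|\gamma|$'' conclusion.
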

\begin{proof}Let $Y$ be the union of geodesics with endpoints $\omega_\gamma$ and $\omega_{\gamma^{-1}}$. Since $X$ is $\pi$-visible, $Y$ is nonempty and $\gamma$-invariant.  Moreover, $Y$ is a closed subspace of $X$. Let $x\in X$ be a limit point of a sequence $(x_n)$ of points in $Y$. Let $c_n$ be the geodesic such that $c_n(0)=x_n$, $c_n(-\infty)=\omega_{\gamma^{-1}}$ and $c_n(\infty)=\omega_{\gamma}$. Thanks to \cite[Proposition II.9.22]{MR1744486}, $c_n$ converges to a geodesic $c$ such that $c(0)=x$, $c(-\infty)=\omega_{\gamma^{-1}}$ and $c(\infty)=\omega_{\gamma}$.\\

Since $Y$ is closed, convex and $\gamma$-invariant, $\left|\gamma|_Y\right|=|\gamma|$. The subspace $Y$ decomposes as a product $Y\simeq Z\times\R$ (see \cite[Theorem II.2.14]{MR1744486}) and $\gamma$ preserves this decomposition. Thus $\gamma|_Y$ can be written $\gamma_0\times\gamma_1$. A simple computation shows that $|\gamma|^2=|\gamma_0|^2+|\gamma_1|^2$. Assume for contradiction that $|\gamma_0|>0$ then there exists $\omega_{\gamma_0}\in\bo Z$ such that $\gamma_0^nx_0\to \omega_{\gamma_0}$ for any $x_0\in Z$. Thus, for any $x\in Y$, $\gamma^nx\to(\arccos(|\gamma_0|/|\gamma_1|),\omega_{\gamma_0},\omega_\gamma)$ in the spherical join $\bo Z\ast\bo \R=\bo Y$.
\end{proof}

\begin{lemma}\label{add}Let $A$ be an abelian group acting by isometries on a CAT(0) space $X$. The set $N$ of neutral elements in $A$ is a subgroup of $A$. Moreover, if $A\simeq\R^n$ and the action is continuous then it is a linear subspace of $A$.
\end{lemma}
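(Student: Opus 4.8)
The plan is to treat the two assertions separately. For the first, let $N \subseteq A$ be the set of neutral elements, i.e. those $a \in A$ with $|a| = 0$. To see that $N$ is a subgroup, I first note $e \in N$ trivially, and $|a| = |a^{-1}|$ for any isometry, so $N$ is closed under inversion. The crux is closure under the group operation: given $a, b \in N$, I want $|ab| = 0$. Here I would use that $A$ is abelian, so $ab = ba$, together with the elementary subadditivity $|ab| \le |a| + |b|$ (which follows from $d(abx,x) \le d(abx, bx) + d(bx, x) = d(ax,x) + d(bx,x)$ and taking infima — but one must be slightly careful since the infima may be attained at different points). A clean way: for any $x$, $d(abx, x) \le d(abx, ax) + d(ax, x) = d(bx, x) + d(ax, x)$; taking the infimum over $x$ of the left side and estimating each term on the right by its own infimum requires a common approximate minimizer. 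Since $a$ and $b$ commute, if $x_k$ is an approximate minimizer for $|b|$ then $d(a^j b x_k, a^j x_k) = d(b x_k, x_k) \to |b| = 0$, and one can run a standard argument showing $|ab| \le |a| + |b|$ holds in general (this is in \cite{MR2574740}); with $|a| = |b| = 0$ this forces $|ab| = 0$, so $ab \in N$.

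For the second assertion, suppose $A \simeq \R^n$ with the action continuous. I already know $N$ is a subgroup. To upgrade this to a linear subspace I must show $N$ is closed and closed under scalar multiplication. Closedness: the translation length function $a \mapsto |a|$ is upper semicontinuous in general and, since the action is continuous, I claim it is in fact continuous (for fixed $x$, $a \mapsto d(ax, x)$ is continuous, and an infimum of continuous functions is upper semicontinuous; lower semicontinuity needs the continuity of the action more carefully, but one can instead argue directly that $|\cdot|$ is subadditive and hence, being a non-negative subadditive function on $\R^n$ that is measurable/continuous, behaves well). Actually the cleanest route: on $\R^n$ the map $t \mapsto |t a|$ for $t \in \R$ is subadditive in $t$ up to the commutativity trick, and combined with $|ta| = |{-t}a|$ one gets $|(t+s)a| \le |ta| + |sa|$; a standard Fekete-type argument then shows $\lim_{t\to\infty} |ta|/t$ exists, call it $\ell(a)$, and in a CAT(0) setting $\ell(a) = 0$ iff $|a| = 0$ (since $|ta| \le t|a|$ always and $|a| \le |ta|/t$ would need the reverse — here the homogeneity $|ta| = |t| \cdot |a|$ actually holds because $\langle a \rangle$ acting on $\R^n$-parametrized... ).

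The honest main obstacle, and the step I expect to be most delicate, is establishing that $\ell \colon A \to \R^+$ is a genuine norm-like (positively homogeneous, subadditive, continuous) function whose zero set is exactly $N$: subadditivity and the identification $\ell^{-1}(0) = N$ are the real content, and they rely on the commutativity of $A$ (so that $a$ and $b$ admit common near-minimizing sequences) and on convexity of the displacement function in a CAT(0) space. Once $\ell$ is known to be a continuous seminorm on $\R^n$ with $\ell^{-1}(0) = N$, it is immediate that $N = \ker \ell$ is a linear subspace. So my plan is: (1) prove $|ab| \le |a| + |b|$ for commuting isometries via common approximate minimizers, deducing $N$ is a subgroup; (2) for $A = \R^n$, define $\ell(a) = \lim_{k} |a^k|/k$, show via Fekete subadditivity that the limit exists and that $\ell$ is subadditive and positively homogeneous; (3) check continuity of $\ell$ using continuity of the action; (4) verify $\ell(a) = 0 \iff a \in N$, using $|a^k| \le k|a|$ one way and a CAT(0) convexity/flat-strip argument the other; (5) conclude $N = \ell^{-1}(0)$ is a linear subspace. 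I would flag step (4)'s reverse implication — that $\ell(a)=0$ forces $|a|=0$ — as the point needing the CAT(0) hypothesis most essentially.
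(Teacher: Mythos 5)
Your plan follows the same broad strategy as the paper --- make the translation length a subadditive, homogeneous, continuous function on $A$ and take its zero set --- but the two steps you yourself flag as delicate are precisely the content of the lemma, and neither is actually established in your proposal. The paper settles both at once by recalling the standard fact that for an isometry $a$ of a CAT(0) space one has $|a|=\lim_{n\to\infty}d(a^nx,x)/n$ for \emph{every} $x$. From this, commutativity gives $|ab|\le|a|+|b|$ in one line (write $(ab)^nx=a^n(b^nx)$ and use the triangle inequality before dividing by $n$), and the same formula gives $|a^n|=n|a|$ and $|a^{-1}|=|a|$, hence homogeneity over $\mathbb{Q}$; continuity of the action then yields $|\lambda a|=|\lambda|\,|a|$ (note that subadditivity gives $\bigl||a|-|b|\bigr|\le |a-b|\le d\left((a-b)x_0,x_0\right)$ in additive notation, so $|\cdot|$ is continuous on $A\simeq\R^n$), and $N$ is the zero set of a seminorm, hence a linear subspace.

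Two concrete gaps. First, your ``common approximate minimizer'' argument for closure under products does not work as written: commutativity only says that the displacement function of $b$ is constant along $a$-orbits, so a sequence almost minimizing the displacement of $b$ need not almost minimize that of $a$, and the pointwise bound $d(abx,x)\le d(ax,x)+d(bx,x)$ cannot be split after taking infima; the clean fix is exactly the limit formula above (or a proper citation of it), not an approximate-minimizer argument. Second, in your steps (2)--(4) the subadditivity and positive homogeneity of the stable norm $\ell(a)=\lim_k|a^k|/k$ are indeed free (Fekete), but the identification $\ell^{-1}(0)=N$ --- equivalently $|a^k|=k|a|$, which is the CAT(0) identity above --- is asserted with only a vague convexity/flat-strip pointer and is false for isometries of general metric spaces; until that single fact is proved or cited, both halves of the lemma remain open in your write-up. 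Once it is in hand, $\ell=|\cdot|$ on the nose, so the detour through $\ell$ collapses and your argument becomes the paper's.
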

\begin{proof}We recall that $|a|=\lim_{n\to\infty}\frac{d(nax,x)}{n}$ for any $x\in X$. Thus
\[
|a+b|=\lim_n\frac{d(n(a+b)x,x)}{n}
\leq\lim_n\frac{d(na(nbx),nax)+d(nax,x)}{n}
\leq|a|+|b|.
\]
Moreover, $|a|=|-a|$ and $|na|=n|a|$ for any $n\in\N$. Thus, if $A\simeq\R^n$, by continuity $|\lambda a|=|\lambda||a|$ for any $\lambda\in\R$ and $a\in A$.
\end{proof}
We are ready to prove  Theorem \ref{ftt}.
\begin{proof}[Proof of Theorem \ref{ftt}]We prove this theorem by induction on $n$. For $n=1$, this is Proposition \ref{prop}. Now suppose $n\geq2$ and choose a primitive element $a\in A$ and apply Proposition \ref{prop}. We obtain an $\mathcal{Z}_{\mathrm{Isom}(X)}(a)$-invariant closed convex subspace $Y_1\simeq Z_1\times\R$ and $a$ acts as a translation of length $|a|$ on the $\R$-factor. The group $\mathcal{Z}_{\mathrm{Isom}(X)}(a)$ acts also diagonally on $Y_1$. Let $N$ be the subset of $A$ formed by elements $b=(b_1,b_2)$ of $A$ such that $|b_1|=0$. Lemma \ref{add} shows $N$ is a subgroup of $A$. The subgroup $N$ acts properly on $\R$ and thus is cyclic. It contains $a$ and since  $a$ is primitive in $A$,  $N=a\Z$. Now let $B$ be a free abelian group of rank $n-1$ such that $A=N\oplus B$. Observe that $B$ acts by ballistic isometries on $Z_1$.  We can now apply an induction for $B\action Z_1$ and we obtain $Y_2\simeq Z\times\R^{n-1}\subseteq Z_1$. By induction $Y_2$ is $\mathcal{Z}_{\mathrm{Isom}(Z_1)}(B)$-invariant and  $\mathcal{Z}_{\mathrm{Isom}(Z_1)}(B)$ preserves this decomposition. Moreover, for any $\gamma\in\mathcal{Z}_{\mathrm{Isom}(X)}(A)$, $\gamma_{Z_1}\in\mathcal{Z}_{\mathrm{Isom}(Z_1)}(B)$ and thus $\gamma_{Z_1}$ preserves $Y_2$ and acts diagonally on it. In particular, $a_{Z_1}$ (which is neutral) has a trivial part on $\R^{n-1}$. Now if we set $Y=Y_2\times\R\simeq Z\times\R^{n}\subseteq X$, $Y$ has the desired properties.
\end{proof}
\begin{proof}[Proof of Corollary \ref{rr}]Thanks to \cite[Corollary 2.9]{MR0302822}, $\Gamma$ contains an Abelian free group of rank $r$. Since this Abelian free group acts also by ballistic isometries, it suffices to apply Theorem \ref{ftt} to find a Euclidean subspace of $X_p(\mathbb{K})$ of dimension $r$ and thus $p\geq r$. 
\end{proof}

\bibliographystyle{halpha}
\bibliography{biblio.bib}

\end{document}